\theoremstyle{plain}
\newtheorem{thm}{Theorem}[section]
\newtheorem{propn}[thm]{Proposition}
\theoremstyle{definition}
\newtheorem{defn}[thm]{Definition}
\newtheorem{rmk}[thm]{Remark}
\newtheorem{ex}[thm]{Example}
\newcommand{\BR}{\mathbb{R}}
\newcommand{\BC}{\mathbb{C}}
\newcommand{\BH}{\mathbb{H}}
\newcommand{\Aut}{\operatorname{Aut}}
\numberwithin{equation}{section}
\begin{document}

\title{On Automorphisms of Complex $b^k$-Manifolds}
\author{Tatyana Barron and Michael Francis}

\maketitle

\begin{abstract}
The $b$-calculus of Melrose is a tool for studying structures on a smooth manifold with a first  order degeneracy at a given hypersurface. In this framework, Mendoza defined  complex $b$-manifolds. In the spirit of work of Scott,  we extend Mendoza's definition to the case of higher-order degeneracies, introducing the notion of a complex $b^k$-manifold for $k$ a positive integer. We then investigate the local and global automorphisms of  complex $b^k$-manifolds. We also propose $b^k$-analogues for some classical spaces of holomorphic functions.
\end{abstract}

\textbf{Mathematics Subject Classification (2010)}: Primary 32Q99; Secondary 32M18

\textbf{Keywords:} $b$-geometry, complex $b$-manifold, automorphism group

\section{Introduction}

Throughout, $M$ is a smooth manifold and $Z \subseteq M$ is a closed hypersurface. We write $C^\infty(M)$ for the ring of smooth, $\BR$-valued functions on $M$  and $\mathfrak{X}(M)$ for the $C^\infty(M)$-module of (smooth) vector fields on $M$. 

Melrose \cite{Melrose} introduced $b$-calculus as an organizational framework for  the study of  differential operators on $M$ with a first order degeneracy along  $Z$. His formalism has significant  applications to index theory on open manifolds.

It is interesting to study ``$b$~versions'' of classical geometries in which the defining data of a given  geometry suffers a degeneracy along $Z$. Various authors have considered $b$-symplectic manifolds, also called log-symplectic manifolds (\cite{GMP}, \cite{GMW3}, \cite{GMW1}, \cite{GMW2}, \cite{LLSS}). Mendoza \cite{Mendoza} defined complex $b$-manifolds and studied the cohomology of the $b$-Dolbeault complex.

Scott \cite{Scott} generalized $b$-calculus by introducing $b^k$-manifolds, where $k$ encodes the order of degeneracy along the hypersurface $Z \subseteq M$. From the point of view of Scott's theory, ordinary $b$-geometry is the case $k=1$. 

In this article, we extend Mendoza's definition in the spirit of Scott's work by  defining what is a complex $b^k$-manifold for $k >1$. We then restrict attention to the (real) two-dimensional case and investigate the local and global automorphisms of complex $b^k$-manifolds. We briefly discuss candidates for function spaces one can attach to a complex $b^k$-manifold. 

\section{Almost-injective Lie algebroids}\label{almostinjective}

Let $M$ be a smooth manifold. The categories of smooth vector bundles on $M$ and finitely-generated, projective $C^\infty(M)$-modules are equivalent by Serre-Swan duality \cite{Swan}. Under this equivalence,  injective maps  of $C^\infty(M)$-modules correspond exactly  to vector bundle maps  that are injective over a dense set. One may therefore regard the following data as equivalent:
\begin{itemize}
\item a projective $C^\infty(M)$-module $\mathcal{F}$ of vector fields on $M$,
\item a vector bundle $A$ over $M$ together with a bundle map $\rho : A \to TM$ that is injective over a dense (open) subset of $M$.
\end{itemize}
One may also deal with the corresponding sheaf of local vector fields:
\begin{align}\label{sheaf}
\mathcal{F}_U  &\coloneq C^\infty(U)\{X|_U : X \in \mathcal{F}\} = \rho(C^\infty(U;A)), \text{ for $U \subseteq M$ open.}
\end{align}

If $\mathcal{F}$ is closed under Lie bracket, then $A$ is naturally a Lie algebroid with bracket coming from the identification $C^\infty(M;A)\cong \mathcal{F}$. Lie algebroids  whose anchor map is injective over a dense set are called \textbf{almost-injective} and appear  in the study of  Stefan-Sussmann singular foliations. See, for example, \cite{Debord} and 5.2.5 in \cite{Crainic-Fernandes}. In this language, the following data are equivalent:

\begin{itemize}
\item an involutive, projective $C^\infty(M)$-module $\mathcal{F}$ of vector fields on $M$,
\item an almost-injective Lie algebroid $A=(A,\rho,[\cdot,\cdot])$ over $M$.
\end{itemize}

\section{The \texorpdfstring{$b$}{b}-tangent bundle}

Two superficially different approaches to $b$-calculus are in use. The original approach of \cite{Melrose} uses a manifold with boundary. It is also common to work  on a manifold without boundary that is equipped with a preferred hypersurface \cite{BLS}. We shall adopt the second approach here.

\begin{defn}
A \textbf{$\mathbf{b}$-manifold} is a smooth 
manifold $M$  together with a  closed hypersurface $Z \subseteq M$ that we refer to as  the \textbf{singular locus}. A \textbf{$\mathbf{b}$-vector field}  is a   vector field on $M$   tangent to $Z$. The $C^\infty(M)$-module of $b$-vector fields is denoted  ${^b\mathfrak{X}}(M)$.  
\end{defn}

\begin{ex}
If $M=\BR^2$ and  $Z=\{0\}\times \BR$, then ${^b\mathfrak{X}(\BR^2)}=\langle x\partial_x,\partial_y\rangle$, the $C^\infty(\BR^2)$-module generated by $x\partial_x$ and $\partial_y$. 
\end{ex}

Above, ${^b\mathfrak{X}(\BR^2)}$ is a free $C^\infty(\BR^2)$-module. In general, for any $b$-manifold $M$,  ${^b\mathfrak{X}}(M)$ is a projective $C^\infty(M)$-module closed under Lie bracket. Thus, referring to Section~\ref{almostinjective}, the following definition makes sense.

\begin{defn}
The  \textbf{$\mathbf{b}$-tangent bundle} of a $b$-manifold $M$ is the almost injective Lie algebroid 
${^bTM}$ associated to the involutive, projective module $^b\mathfrak{X}(M)$.
\end{defn}

\section{\texorpdfstring{$b^k$}{bk}-Tangent bundles}

Fix an integer $k \geq 2$. By analogy with $b$-vector fields ($k=1$),  a $b^k$-vector field  on a manifold $M$ with given closed hypersurface $Z$ ought to be a vector field that is ``tangent to $Z$ to $k$th order''.  It is clear how to best proceed in the model Euclidean example  where $M=\BR^n$, $Z=\{0\}\times \BR^{n-1}$.

\begin{defn}\label{stanbk}
Fix a positive integer $k$. The \textbf{standard module of $\mathbf{b^k}$-vector fields} on $\BR^n$ is $\langle x_1^k \partial_{x_1}, \partial_{x_2},\ldots,\partial_{x_n}\rangle$. That is, the $C^\infty(\BR^n)$-module of vector fields on $\BR^n$ whose first component vanishes to $k$th order on $\{0\}\times\BR^{n-1}$. 
\end{defn}

Unfortunately, the naive notion of ``$k$th order tangency''   used above is not compatible with  smooth coordinate changes.

\begin{ex}
The diffeomorphism $\theta : \BR^2 \to \BR^2$,  $\theta(x,y)=(e^yx,y)$ satisfies $\theta_*(\partial_y) =  x \partial_x +\partial_y$. Thus, although $\theta$ preserves the $y$-axis, pushforward  by $\theta$ does not preserve the standard module of $b^2$-vector fields.
\end{ex}

The above example shows it does not make sense to talk about \emph{the} module of $b^k$-vector fields. Instead, we prescribe as external data a projective module of vector fields on $M$ that is tangent to $Z$ and locally $C^\infty$-conjugate to the model example of Definition~\ref{stanbk}.

\begin{defn}Fix a positive integer $k$. A \textbf{module of $\mathbf{b^k}$-vector fields} for  an $n$-dimensional, smooth manifold $M$  with  a given closed hypersurface $Z \subseteq M$ is  a projective $C^\infty(M)$-module $\mathcal{F} \subseteq \mathfrak{X}(M)$ such that  every $p \in M$ admits an open neighbourhood $U$ and a diffeomorphism $\theta : U \to \BR^n$ satisfying:
\[ \theta_*(\mathcal{F}_U) = 
\begin{cases}
\langle x_1^k \partial_{x_1},\partial_{x_2},\ldots,\partial_{x_n}\rangle & \text{ if } p \in Z\phantom{.} \\
\mathfrak{X}(\BR^n)  & \text{ if } p \notin Z \\
\end{cases} \]
(see  \eqref{sheaf} for notation). 
Such an $\mathcal{F}$ is  involutive; its corresponding    almost-injective Lie algebroid is called the \textbf{$\mathbf{b^k}$-tangent bundle} and denoted ${^{b^k}TM}$. 
\end{defn}

\begin{rmk}
It is tempting to call a pair  $(M,\mathcal{F})$  consisting of a manifold with a given module of $b^k$-vector fields a  \emph{$b^k$-manifold}. However, we shall not do this because  Scott \cite{Scott} has already defined a  $b^k$-manifold  to be a triple $(M,Z,j_Z)$, where $j_Z$ is the $(k-1)$-jet along $Z$ of a defining function for $Z$ (plus an  orientation on $M$ and $Z$). Such a jet induces  a module of $b^k$-vector fields in our sense, namely (Definition~2.5 in \cite{Scott}) the collection $\mathcal{F}$ of vector fields $X$ such that $Xf$ vanishes to $k$th order on $Z$, for any defining function $f$ representing $j_Z$. On the other hand, the assignment $j_Z \mapsto \mathcal{F}$ is neither injective nor surjective. Regarding injectivity, taking $k=2$, $M=\BR^2$, $Z=\{0\}\times\BR$, both $j_Z=x$ and $j_Z=2x$ induce $\mathcal{F}=\langle x^2\partial_x,\partial_y\rangle$. Regarding surjectivity, the module of $b^2$-vector fields $\langle x^2\partial_x , \partial_y + x \partial_x\rangle$ for $M=\BR \times S^1$, $Z=\{0\}\times S^1$, for which local trivializations can be obtained from $\theta(x,y)=(e^yx,y)$, cannot arise from the jet of a defining function. This can be seen from Chapter~5 of \cite{Francis} where  a holonomy invariant is attached to a module of $b^k$-vector fields (there called a \emph{transversely order $k$ singular foliation}).
\end{rmk}

\section{\texorpdfstring{$b$}{b}-Complex and \texorpdfstring{$b^k$}{bk}-complex manifolds}

Recall that a complex structure on  $M$ is given by a  subbundle ${T^{0,1}M}$ of the complexified tangent bundle satisfying: (i) $\BC TM =\overline{{T^{0,1}M}} \oplus {T^{0,1}M}$, and (ii) $T^{0,1}M$ is involutive. This  bundle-theoretic definition is equivalent to the definition via holomorphic charts by the  Newlander-Nirenberg theorem: every point in $M$ admits local coordinates $(x_1,y_1,\ldots,x_n,y_n)$ in which the complex derivatives $\partial_{\overline z}=\frac{1}{2} (\partial_{x_j}+i\partial_{y_j})$, $j=1,\ldots,n$ form a local frame for $T^{0,1}M$. To define complex $b^k$-manifolds, we simply replace $TM$ by ${^{b^k}TM}$. The case $k=1$ was given in \cite{Mendoza}.

\begin{defn}
Fix a positive integer $k$. A \textbf{complex $\mathbf{b^k}$-structure} for a smooth manifold $M$  with a given closed hypersurface $Z\subseteq M$  and module of $b^k$-vector fields $\mathcal{F}$  is a (complex) subbundle ${^{b^k}T^{0,1}M}$ of the complexified $b^k$-tangent bundle satisfying: (i)  $\BC ({^{b^k}TM}) = \overline{^{b^k}T^{0,1}M} \oplus {^{b^k}T^{0,1}M}$, and (ii) ${^{b^k}T^{0,1}M}$ is involutive. It is  equivalent to work with the corresponding bracket-invariant $C^\infty(M,\BC)$-module of complex vector fields   $\mathcal{F}^{0,1}  \subseteq \BC\mathcal{F}$ and, actually, it is the pair  $(M,\mathcal{F}^{0,1})$ that we call a \textbf{complex $\mathbf{b^k}$-manifold}.
\end{defn}

\begin{rmk}\label{recover}
The module $\mathcal{F}^{0,1}$ recovers all the other data at hand. For example, taking real parts recovers  $\mathcal{F}$, and hence ${^{b^k}TM}$. The singular set for the anchor map is the hypersurface $Z$. We define complex $b^k$-manifolds as  pairs  $(M,\mathcal{F}^{0,1})$ in order to make the definition below simple.
\end{rmk}

\begin{defn}\label{isomdef}
An \textbf{isomorphism} $\theta: (M,\mathcal{E}^{0,1}) \to (N,\mathcal{F}^{0,1})$ of two complex $b^k$-manifolds  is a diffeomorphism $\theta : M \to N$ satisfying
$\theta_*(\mathcal{E}^{0,1})=\mathcal{F}^{0,1}$. 
\end{defn}

\begin{rmk}\label{restriso}The $b^k$-tangent bundle of a complex $b^k$-manifold $M$ is isomorphic to the usual tangent bundle away from the singular locus $Z$, so  $M\setminus Z$ is an ordinary complex manifold. Moreover, complex $b^k$-manifold isomorphisms restrict to complex manifold isomorphisms away from the singular loci.
\end{rmk}

\begin{rmk}
Nothing prevents  one from speaking about involutive almost complex structures on arbitrary Lie algebroids. This is done in \cite{Ida-Popescu} and \cite{Popescu}, for example. On the other hand, at this level of generality, it is not clear what the formal integrability condition of involutivity might imply about the existence of a special local normal forms in the spirit of Newlander-Nirenberg. The case of complex $b$-manifolds is discussed in the next section.
\end{rmk}

\section{Remark on the \texorpdfstring{$b$}{b}-Newlander-Nirenberg theorem}

Ideally, complex $b^k$-manifolds should have a single local model depending only on $k$  and the dimension, thus allowing for an equivalent definition in terms of appropriate ``$b^k$-holomorphic charts''. This holds for ordinary complex manifolds (if one likes, the case $k=0$) by the Newlander-Nirenberg theorem. The $k=1$ case  was partially resolved  in Section~5 of \cite{Mendoza};  complex $b$-manifolds do not have ``formal local invariants'' at the boundary. Recently (results to appear elsewhere), the authors have established the full ``no local invariants'' result in the $k=1$ case and expect the same  to hold  for $k \geq 2$. For this reason, we focus on a single model case in our study of automorphisms below. 

\begin{defn}
The  \textbf{standard $\mathbf{b^k}$-complex plane} is the complex $b$-manifold ${^{b^k}\BR^2} \coloneq (\BR^2,\langle L_k\rangle )$, where $\langle L_k\rangle$ is the $C^\infty(\BR^2,\BC)$-module singly-generated by
\begin{align} L_k \coloneq x^k\partial_x + i \partial_y. 
\end{align}
\end{defn}

As stated above, our justification for focusing on these model cases is the following result whose proof will appear elsewhere.

\begin{thm}
Let $M$ be a two-dimensional complex $b$-manifold with singular locus $Z$.  Then, for every $p \in Z$, there is a neighbourhood $U\subseteq M$ of $p$ and an open set $V \subseteq {^{b^k}\BR^2}$ such that $U \cong V$ as complex $b$-manifolds.
\end{thm}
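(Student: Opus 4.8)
The statement is local near $p\in Z$, and by Remark~\ref{recover} any isomorphism of complex $b^k$-manifolds must send the (intrinsic) singular locus $Z$ to that of the model and preserve the underlying module of $b^k$-vector fields. The plan is therefore to fix adapted coordinates and then normalize the complex structure in two stages: formally along $Z$, and then analytically. First I would use the definition of a module of $b^k$-vector fields to choose coordinates $(x,y)$ near $p$ with $Z=\{x=0\}$ and $\mathcal{F}=\langle x^k\partial_x,\partial_y\rangle$. Shrinking $U$, the rank-one module $\mathcal{F}^{0,1}\subseteq\BC\mathcal{F}$ has a single generator $L=a\,x^k\partial_x+b\,\partial_y$ with $a,b\in C^\infty(U,\BC)$. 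The transversality condition~(i) amounts to $a\overline{b}-b\overline{a}=2i\operatorname{Im}(a\overline{b})\neq 0$, which in particular forces $b$ to be nowhere zero; rescaling the generator by $1/b$ I may assume $L=c\,x^k\partial_x+\partial_y$ with $\operatorname{Im} c$ nowhere zero, hence of constant sign, and after possibly replacing $L$ by $\overline{L}$ this sign is matched to that of the model generator $L_k$. Because $\mathcal{F}^{0,1}$ has rank one, involutivity~(ii) is automatic, so there is no integrability obstruction and the whole problem reduces to producing a normalizing diffeomorphism.

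Next I would remove the difference between $L$ and $L_k$ to infinite order along $Z$. Writing $c(x,y)=\sum_{j\ge 0}c_j(y)\,x^j$ and seeking a diffeomorphism fixing $\{x=0\}$, the condition $\theta_* L\parallel L_k$ unwinds into a triangular system that can be solved recursively for the Taylor coefficients along $x=0$ of the components of $\theta$. This is exactly the assertion that complex $b^k$-structures possess no \emph{formal} local invariants at the singular locus; for $k=1$ it is carried out in Section~5 of \cite{Mendoza}, and I expect the same recursion to go through for $k\ge 2$ upon replacing the Euler field $x\partial_x$ by $x^k\partial_x$ throughout. Borel's lemma then realizes the formal solution as a genuine smooth $b^k$-diffeomorphism, after which I may assume $L=L_k+E$, where the coefficients of the complex vector field $E$ vanish to infinite order along $Z$.

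The remaining, and in my view hardest, step is to absorb the flat error $E$ by a diffeomorphism equal to the identity to infinite order on $Z$. Away from $Z$ this is a Beltrami problem: in the cylindrical coordinate $s=-\tfrac{1}{(k-1)x^{k-1}}$ (and $s=\log x$ when $k=1$) one has $x^k\partial_x=\partial_s$, so $L_k=\partial_s+i\partial_y=2\partial_{\overline{\zeta}}$ with $\zeta=s+iy$, and $L=L_k+E$ is uniformly elliptic with Beltrami coefficient flat along $Z$, that is, decaying rapidly as $|s|\to\infty$. I would solve $Lw=0$ for $w$ a small perturbation of $\zeta$ by a contraction argument for the Beltrami operator in function spaces encoding this decay, and then verify that the resulting map, rewritten in the $(x,y)$ variables, extends across $Z$ to a smooth diffeomorphism that is the identity to infinite order there. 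The crux is precisely this extension: one must show that the flatness of $E$ propagates to the solution so that no new singularity is introduced at $x=0$, where the degeneracy of the anchor puts the problem outside standard elliptic theory and calls for the weighted ($b$-Sobolev) estimates of Melrose's calculus. Composing the formal normalizer of the second step with this correction produces, on a possibly smaller neighbourhood $U$, a diffeomorphism $\theta$ with $\theta_*\langle L\rangle=\langle L_k\rangle$; taking real parts recovers $\theta_*\mathcal{F}=\langle x^k\partial_x,\partial_y\rangle$, so $\theta\colon U\to V$ onto its open image $V\subseteq{^{b^k}\BR^2}$ is the required isomorphism of complex $b^k$-manifolds.
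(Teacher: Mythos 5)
A preliminary remark: the paper does not actually prove this theorem --- it is introduced explicitly as ``a result whose proof will appear elsewhere'' --- so there is no in-paper argument to compare yours against. Your outline is nonetheless consistent with the route the authors signal (the formal normalization of Section~5 of \cite{Mendoza}, plus a genuine analytic step to kill the remaining flat error), and your preparatory reductions are correct: adapted coordinates with $\mathcal{F}=\langle x^k\partial_x,\partial_y\rangle$, the observation that condition~(i) forces $b\neq 0$ and $\operatorname{Im}(a/b)\neq 0$, the automatic involutivity in rank one, and the substitution $s=-\tfrac{1}{(k-1)x^{k-1}}$ turning $L_k$ into $2\partial_{\overline{\zeta}}$ are all sound. (One small point: replacing $L$ by $\overline{L}$ replaces the structure by its conjugate, which is a different complex $b^k$-structure; you should say explicitly that you instead compose with $(x,y)\mapsto(x,-y)$, which preserves $\langle x^k\partial_x,\partial_y\rangle$ and carries $L_k$ to $\overline{L_k}$, so that the sign of $\operatorname{Im}c$ can be matched by an honest isomorphism.)

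As a proof, however, the proposal has two genuine gaps, both of which you partly acknowledge. First, the formal step for $k\geq 2$ is asserted by analogy (``I expect the same recursion to go through''); this must actually be checked, since the normalizing diffeomorphisms are constrained to preserve $\langle x^k\partial_x,\partial_y\rangle$ and the recursion in \cite{Mendoza} uses the specific structure of $x\partial_x$ --- note that the authors themselves claim the full result only for $k=1$ and merely ``expect'' it for $k\geq 2$. Second, and more seriously, the analytic step is the entire content of the theorem and is left as a plan: one must (a) exhibit function spaces on the cylinder or half-plane in which the Beltrami operator with rapidly decaying coefficient is a contraction, (b) show the resulting solution is a diffeomorphism near infinity, and (c) prove that this solution, rewritten in $(x,y)$, extends smoothly across $x=0$ and agrees with the identity to infinite order there. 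Step (c) is precisely where the degeneracy of the anchor defeats standard elliptic regularity, and ``I would solve \dots and then verify'' does not discharge it. In short, the proposal is a credible research plan that matches the strategy the authors hint at, but it is not a proof, and the paper offers no proof against which its hardest steps could be checked.
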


\section{Automorphisms of complex \texorpdfstring{$b^k$}{bk}-manifolds}

We describe the global automorphisms of the standard $b^k$-complex planes by relating complex $b^k$-manifold automorphisms to standard complex automorphisms (see Remark~\ref{restriso}). We first note the following simple fact.

\begin{propn}
Fix a positive integer $k$. Then, 
\[ \Aut({^{b^k}\BR^2})=\{ \theta \in \operatorname{Diff}(\BR^2) : \theta_*( L_k) \sim  L_k \}. \]
Here, $ L_k =x^k \partial_x + i \partial_y$ and $\sim$ denotes equality modulo multiplication by a nowhere-vanishing smooth function. Every automorphism  $\theta$ of ${^{b^k}\BR^2}$ satisfies $\theta_*(Z)=Z$ where $Z=\{0\}\times\BR$. 
\end{propn}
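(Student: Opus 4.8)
The plan is to unwind the definition of automorphism into the stated pushforward condition, and then to read off the preservation of $Z$ as an intrinsic feature of the module. By Definition~\ref{isomdef}, $\theta \in \Aut({^{b^k}\BR^2})$ means that $\theta$ is a diffeomorphism of $\BR^2$ with $\theta_*(\langle L_k\rangle) = \langle L_k\rangle$. The first thing I would record is the elementary fact that pushing forward a singly-generated module produces the module generated by the pushforward: since $\theta_*$ is a bijection of $\BC\mathfrak{X}(\BR^2)$ that is semilinear over the ring automorphism $(\theta^{-1})^* : C^\infty(\BR^2,\BC) \to C^\infty(\BR^2,\BC)$ (concretely, $\theta_*(fX) = ((\theta^{-1})^*f)\,\theta_* X$), we get $\theta_*(\langle L_k\rangle) = \langle \theta_* L_k\rangle$. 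Thus being an automorphism is equivalent to the module equality $\langle \theta_* L_k\rangle = \langle L_k\rangle$.

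Next I would convert this module equality into the relation $\theta_*(L_k) \sim L_k$. The two inclusions furnish smooth functions $u,v$ with $\theta_* L_k = u L_k$ and $L_k = v\,\theta_* L_k$, whence $(1 - vu)L_k = 0$. The key observation — and the one that makes the whole statement clean — is that $L_k = x^k\partial_x + i\partial_y$ is a \emph{nowhere-vanishing} section of $\BC\mathfrak{X}(\BR^2)$, because its $\partial_y$-component is the constant $i \neq 0$. Evaluating $(1-vu)L_k = 0$ pointwise therefore forces $vu \equiv 1$, so $u$ is nowhere-vanishing and $\theta_* L_k \sim L_k$. Conversely, if $\theta_* L_k = u L_k$ with $u$ nowhere-vanishing, then $u$ is a unit in $C^\infty(\BR^2,\BC)$, so $\langle \theta_* L_k\rangle = \langle L_k\rangle$ and $\theta$ is an automorphism. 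This gives the displayed equality of sets.

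For the final assertion I would argue that $Z$ is determined intrinsically by the module and so is preserved by any diffeomorphism respecting it. Taking real parts of the generator (Remark~\ref{recover}) gives $\mathcal{F} = \operatorname{Re}\langle L_k\rangle = \langle x^k\partial_x, \partial_y\rangle$, and applying $\operatorname{Re}$ to $\theta_*\langle L_k\rangle = \langle L_k\rangle$ (which commutes with $\theta_*$, as pushforward is real-linear and commutes with multiplication by $i$) yields $\theta_*\mathcal{F} = \mathcal{F}$. The pointwise evaluation $\{X(p) : X \in \mathcal{F}\} \subseteq T_p\BR^2$ has dimension $2$ when $x \neq 0$ and dimension $1$ on $\{x = 0\} = Z$; that is, $Z$ is exactly the locus where this evaluation fails to fill $T_p\BR^2$ (equivalently, where the anchor of ${^{b^k}T\BR^2}$ is not injective). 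Because $\theta_*\mathcal{F} = \mathcal{F}$ and the evaluation fibers transform under the pointwise isomorphisms $d\theta_p$, this rank-dropping locus is carried to itself, giving $\theta(Z) = Z$.

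The computations are all routine, so there is no serious obstacle here; the single point that genuinely drives the argument is the nowhere-vanishing of $L_k$, which is what upgrades ``equal as modules'' to ``proportional by a smooth unit.'' The only steps requiring any care are the semilinear bookkeeping in identifying $\theta_*\langle L_k\rangle$ with $\langle \theta_* L_k\rangle$, and the pointwise-evaluation characterization of $Z$ used in the last paragraph.
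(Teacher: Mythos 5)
Your proof is correct and follows the same route as the paper's (which simply unwinds Definition~\ref{isomdef} and cites Remark~\ref{recover} for the invariance of $Z$); you have merely filled in the details the paper leaves implicit, namely that the nowhere-vanishing of $L_k$ upgrades equality of singly-generated modules to proportionality by a smooth unit, and that $Z$ is intrinsically the rank-dropping locus of the real module $\mathcal{F}$.
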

\begin{proof}
From Definition~\ref{isomdef}, an isomorphism ${^{b^k}\BR^2}\to{^{b^k}\BR^2}$ is a diffeomorphism preserving the $C^\infty(\BR^2,\BC)$-module singly-generated by $L_k$, hence the first statement. For the second statement, see Remark~\ref{recover}.
\end{proof}

\begin{ex}\label{flip}
For every positive integer $k$, $(x,y)\mapsto(-x,(-1)^{k+1}y)$ is an order-two automorphism of ${^{b^k}\BR^2}$ that maps the (open) left half-plane $\BR^2_-$ onto the right half-plane $\BR^2_+$. 
\end{ex}

\begin{defn}
We write $\Aut_+({^{b^k}\BR^2})$ for the normal subgroup of $\Aut({^{b^k}\BR^2})$ consisting of automorphisms which preserve the right half-plane $\BR^2_+$. 
\end{defn}

As the full automorphism group $\Aut({^{b^k}\BR^2})$ is the semidirect product product of $\Aut_+({^{b^k}\BR^2})$ by the order-two automorphism of Example~\ref{flip}, we are content to obtain a description of $\Aut_+({^{b^k}\BR^2})$.

\begin{thm}
The group $\Aut_+({^b\BR^2})$  is isomorphic to $\BR\times\BR$. More precisely, it is generated by the following 1-parameter subgroups:
\begin{align*} 
\text{vertical translations:} &&  (x,y) &\mapsto(x,y+t) && t  \in \BR \\
\text{horizontal scalings:} && (x,y) &\mapsto(e^tx,y) &&t \in \BR.
\end{align*}
\end{thm}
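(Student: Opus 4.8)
The plan is to exploit Remark~\ref{restriso}: any $\theta \in \Aut_+({^b\BR^2})$ restricts to an honest biholomorphism of the right half-plane $\BR^2_+$ equipped with the complex structure induced by $L_1 = x\partial_x + i\partial_y$. So my first step is to uniformize that complex structure. Setting $w = \ln x + iy$ on $\BR^2_+$, a one-line computation gives $L_1 w = x\cdot\tfrac1x + i\cdot i = 0$, so $w$ is a holomorphic coordinate and $\Phi(x,y) = (\ln x, y)$ is a biholomorphism from $\BR^2_+$ onto $\BC$. Conjugating by $\Phi$, the restriction $\theta|_{\BR^2_+}$ becomes an automorphism of $\BC$, hence is affine: $w \mapsto aw + b$ with $a \in \BC$ nonzero and $b \in \BC$.

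The second step is to translate this affine map back into $(x,y)$-coordinates and read off the constraints forced by requiring $\theta$ to extend to a global diffeomorphism of $\BR^2$ preserving $Z = \{0\}\times\BR$. Writing $a = p+iq$ and $b = r+is$, one computes $x' = e^{r}x^{p}e^{-qy}$ and $y' = py + q\ln x + s$ on $\BR^2_+$. Boundedness of $y'$ as $x \to 0^+$ forces $q = 0$, after which $x' = e^{r}x^{p}$ and $y' = py + s$; that $x' \to 0$ (so $Z$ is preserved) forces $p > 0$; and the Jacobian determinant $e^{r}p^{2}x^{p-1}$, which for a diffeomorphism must extend to a nonzero finite value on $Z$, forces $p = 1$. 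Thus $\theta(x,y) = (e^{r}x, y+s)$ on $\BR^2_+$.

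The third step treats the left half-plane. Running the identical uniformization there with $w = \ln|x| + iy$ (or, equivalently, conjugating by the flip automorphism of Example~\ref{flip}, which carries $\BR^2_-$ to $\BR^2_+$) shows $\theta(x,y) = (e^{r'}x, y+s')$ on $\BR^2_-$ for some $r', s'$. Since $\theta$ is $C^1$ across $Z$, matching values and first derivatives along $x = 0$ forces $s = s'$ and $e^{r} = e^{r'}$, so the two pieces glue into the single global map $\theta(x,y) = (e^{r}x, y+s)$. A direct check that each such map satisfies $\theta_* L_1 = L_1$ (hence is genuinely an automorphism) identifies $\Aut_+({^b\BR^2})$ with this family; as composition reads $(r,s)\cdot(r',s') = (r+r', s+s')$, the group is abelian and isomorphic to $\BR\times\BR$, generated by the horizontal scalings ($s=0$) and vertical translations ($r=0$) as claimed.

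I expect the main obstacle to be the boundary analysis of the second step, since this is exactly where the $b$-geometry departs from ordinary complex geometry. Over $\BR^2_+$ alone the automorphism group is the full four-real-parameter affine group of $\BC$, and it is only the demand of a smooth, non-degenerate extension across the singular locus that collapses it to the two-parameter group. Care is needed to argue that the limits as $x \to 0$ genuinely pin down $a$ and $b$ rather than merely guaranteeing a continuous extension; this is precisely where using that $\theta$ is a diffeomorphism—so that $\det D\theta$ is continuous and nowhere vanishing—does the essential work.
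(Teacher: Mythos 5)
Your proposal is correct and follows essentially the same route as the paper: uniformize $(\BR^2_+,\langle L_1\rangle)$ to $\BC$ via $w=\log x+iy$, reduce to the affine automorphism group $\BC^*\ltimes\BC$ of the plane, and determine which affine maps extend smoothly across the singular locus. Your execution is if anything more careful than the paper's --- the Jacobian-determinant argument forcing $p=1$ and the explicit $C^1$ matching of the two half-plane formulas across $Z$ make rigorous what the paper states only briefly --- so there is nothing to correct.
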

\begin{proof}
Let $\BC$ have its usual complex structure. The diffeomorphism
\begin{align*}
\theta : \BR^2_+ \to \BR^2 && \theta(x,y)=\log x + iy
\end{align*}
satisfies $\theta_*(x \partial_x + i \partial_y) = \partial_x + i\partial_y$; it is an isomorphism from $(\BR^2_+,\langle L_1\rangle)$ to $\BC=(\BR^2, \langle L_0\rangle)$.  From Remark~\ref{restriso}, the restriction of any $\alpha \in \Aut_+({^b\BR^2})$ to $\BR^2_+$ pushes forward through $\theta$ to an automorphism of the ordinary complex structure of $\BR^2 = \BC$. The automorphisms of $\BC$ are  the group $\BC^* \ltimes \BC$ of affine transformations. Pulled back through $\theta$, the translation automorphisms of $\BC$ become the automorphisms $(x,y) \mapsto (x+s,e^t y)$ of $(\BR^2_+, \langle  L_1\rangle)$, for $s,t \in \BR$. These extend (by the same formula) to automorphisms of ${^b\BR^2}$. Moreover, these extensions  can be seen to be unique by considering the analogous identification of the left half-plane $(\BR^2_-, \langle  L_1 \rangle )$ with  $\BC$.  

On the other hand, the scaling symmetries of $\BC$ pull back through $\theta$ to the automorphisms $(x,y) \mapsto (e^{s\log x-t y},t\log x + s y)$ of $(\BR^2_+, \langle  L_1\rangle)$, for $s+it \in \BC^*$. Such maps do not extend continuously to $\BR^2$ unless $t =0$ and $s>0$. In the latter case, one obtains the automorphisms $(x,y) \mapsto (sx, y^s)$, $s>0$ of $(\BR^2_+, \langle  L_1\rangle)$ which can be extended continuously to $\BR^2$, but not to diffeomorphisms (excepting  the trivial case $s=1$). 
\end{proof}

\begin{thm}
If $k \geq 2$ is an integer,  the group  $\Aut_+({^{b^k}\BR^2})$ is isomorphic to the ``$ax+b$ group'' $\BR_+ \ltimes \BR$. More precisely, it  is generated by the following 1-parameter subgroups:
\begin{align*} 
\text{vertical translations:} &&  (x,y) &\mapsto(x,y+t) && t  \in \BR \\
\text{hyperbolic transformations:} && (x,y) &\mapsto (e^{-\frac{t}{k-1}}x,e^ty) &&t \in \BR.
\end{align*}
\end{thm}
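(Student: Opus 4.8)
The plan is to imitate the proof of the $k=1$ case: uniformize the right half-plane $\BR^2_+$ by an explicit change of variables, transport automorphisms to automorphisms of a model domain, and then read off which ones survive extension across the singular locus $Z=\{0\}\times\BR$. The essential new feature, and the source of the different answer, is that for $k\geq 2$ the uniformizing map does \emph{not} cover all of $\BC$ (as $\log x + iy$ did when $k=1$) but only a half-plane, whose biholomorphism group is the larger $\mathrm{PSL}(2,\BR)$. The $ax+b$ group will emerge as the subgroup compatible with the degeneracy at $Z$.

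First I would introduce the diffeomorphism
\[ \theta:\BR^2_+\to\{u<0\},\qquad \theta(x,y)=\left(\tfrac{-1}{(k-1)\,x^{k-1}},\,y\right), \]
and verify by a one-line computation that $\theta_*(L_k)=\partial_u+i\partial_v$ in the target coordinates $(u,v)$. Thus $\theta$ is an isomorphism from $(\BR^2_+,\langle L_k\rangle)$ onto the left half-plane $\{u<0\}$ (coordinate $w=u+iv$) with its standard complex structure, and crucially the approach $x\to 0^+$ to $Z$ corresponds to $u\to-\infty$. By Remark~\ref{restriso}, any $\alpha\in\Aut_+({^{b^k}\BR^2})$ restricts on $\BR^2_+$ to a biholomorphism of the complex structure there, which $\theta$ transports to an element $\tilde\alpha$ of $\Aut(\{u<0\})\cong\mathrm{PSL}(2,\BR)$ (the half-plane being biholomorphic to the disk).

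Next I would determine which $\tilde\alpha$ occur. The key step is that, since every automorphism preserves $Z$ (Remark~\ref{recover}) and $\alpha$ is continuous up to $Z$, the induced map satisfies $\operatorname{Re}\tilde\alpha(w)\to-\infty$ as $\operatorname{Re}(w)\to-\infty$; equivalently, $\tilde\alpha$ fixes the distinguished ideal boundary point at which $u\to-\infty$. The stabilizer of this point in $\mathrm{PSL}(2,\BR)$ is exactly the affine group, so $\tilde\alpha(w)=\lambda w+is$ for some $\lambda>0$, $s\in\BR$. Pulling these back through $\theta$, I would check that dilations $w\mapsto\lambda w$ become the hyperbolic transformations $(x,y)\mapsto(\lambda^{-1/(k-1)}x,\lambda y)$ (i.e. the stated family with $\lambda=e^t$) and imaginary translations $w\mapsto w+is$ become the vertical translations $(x,y)\mapsto(x,y+s)$; both visibly extend to smooth $b^k$-automorphisms of all of $\BR^2$, and conversely every affine $\tilde\alpha$ is a composition of these. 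A short conjugation computation gives $H_tT_sH_t^{-1}=T_{e^ts}$, exhibiting the semidirect-product structure $\BR_+\ltimes\BR$.

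The remaining point, and the one I expect to require the most care, is that restriction to $\BR^2_+$ does not \emph{a priori} determine $\alpha$ on $\BR^2_-$, since $\BR^2_+$ is not dense. I would close this exactly as in the $k=1$ case, using the analogous uniformization of the left half-plane: if $\gamma\in\Aut_+$ restricts to the identity on $\BR^2_+$, then $\gamma$ fixes $Z$ pointwise by continuity and restricts on $\BR^2_-$ to a half-plane automorphism fixing the ideal point $u\to-\infty$, hence of the form $w\mapsto\lambda w+is$; fixing $Z$ pointwise then forces $\lambda=1$, $s=0$, so $\gamma=\id$. This injectivity of the restriction map, together with the surjectivity onto the affine group above, identifies $\Aut_+({^{b^k}\BR^2})$ with $\BR_+\ltimes\BR$ and the two stated one-parameter subgroups as generators. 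The technical crux throughout is the faithful translation of ``continuous extension across $Z$'' into ``fixes the ideal boundary point,'' which is what cuts $\mathrm{PSL}(2,\BR)$ down to its affine Borel subgroup.
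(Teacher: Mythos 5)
Your proposal is correct and follows essentially the same route as the paper: your uniformizing map is the paper's $\theta(x,y)=y+\tfrac{i}{(k-1)x^{k-1}}$ composed with a rotation, and both arguments transport $\Aut_+({^{b^k}\BR^2})$ into $\mathrm{PSL}(2,\BR)$, isolate the affine subgroup as the elements compatible with the degeneracy at $Z$, and settle uniqueness of extensions via the analogous uniformization of the left half-plane. The only cosmetic difference is that you characterize that subgroup as the stabilizer of the ideal boundary point corresponding to $x\to 0^+$, whereas the paper uses the KAN decomposition and checks directly that nontrivial elliptic elements fail to extend (by computing the limit $\cot t$); these are the same observation.
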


\begin{proof}
Let $\BH \subseteq \BC$ denote the upper half complex plane $\operatorname{Im}(z) > 0$ with its standard complex structure. The diffeomorphism
\begin{align*}
\theta : \BR^2_+ \to \BH && \theta(x,y) = y +  \tfrac{1}{(k-1)x^{k-1}}i
\end{align*}
satisfies $\theta_*(x^k\partial_x+i\partial_y)= - \partial_y + i\partial_x  \sim \partial_x + i \partial_y$; it is an isomorphism from $(\BR^2_+,\langle  L_k\rangle )$ to $\BH=(\BH,\langle L_0\rangle)$. Referring again to Remark~\ref{restriso}, every element of $\Aut_+(\BR^2,\langle  L_k\rangle)$ determines, by conjugation through $\theta$, an automorphism of $\BH$. The usual group of complex automorphisms of $\BH$ is  $\operatorname{PSL}(2,\BR)$, acting as fractional linear transformations. Using the KAN decomposition for $\operatorname{SL}(2,\BR)$,  $\Aut(\BH)$ is the product of the following three one-parameter subgroups:
\begin{align*}
K &= \{ z \mapsto \tfrac{z \cos t - \sin t}{z\sin t + \cos t} : t \in \BR \} && \text{(elliptic)} \\ 
A &= \{ z \mapsto e^t z : t \in \BR \} && \text{(hyperbolic)} \\  
N &= \{ z \mapsto z+ t : t \in \BR \}   && \text{(parabolic)}.
\end{align*}
Pulled back through $\theta$, the latter two groups of  automorphisms of $\BH$ become the following groups of automorphisms $(\BR^2_+,\langle  L_k \rangle)$:
\begin{align*} 
A' = \{ (x,y) \mapsto  (e^{-\frac{t}{k-1}} x , e^t y) : t \in \BR \},  &&
N' = \{ (x,y) \mapsto (x,y+t) : t \in \BR \}  .
\end{align*}
These groups of automorphisms extend (by the same formulas) to automorphisms of ${^{b^k}\BR^2}$. Their extensions are also unique (by considering their action on the left half-plane as well). On the other hand, the automorphisms  of $(\BR^2_+,\langle L_k\rangle)$ arising as the pullback through $\theta$ of $z \mapsto \tfrac{z \cos t - \sin t}{z\sin t + \cos t}$ for 
$t$ not an integer multiple of $\pi$ do not extend continuously to $\BR^2$. Indeed, an elementary computation shows that, if the imaginary part of $z$ goes to $+\infty$ and the real part of $z$ is held fixed, then $\tfrac{z \cos t - \sin t}{z\sin t + \cos t} \to \cot(t)$. It follows, if $\alpha_t$ is the corresponding automorphism of $(\BR^2,\langle L_k\rangle)$, that $\alpha_t(x,y)$ approaches $(+\infty, \cot(t))$ as $x \to 0^+$ and $y$ is held fixed. In particular, $\alpha_t$ does not admit a continuous extension to $\BR^2$.
\end{proof}

\begin{rmk}
We note that the action of $\Aut_+({^{b^k}\BR^2})$ on the singular locus  $Z=\{0\} \times \BR$ is faithful for $k\geq 2$, but not for $k=1$. 
\end{rmk}

We  conclude this section with an  example of a family of local automorphisms of ${^b\BR^2}$ that do not extend to a global ones. 

\begin{ex}
Let $\Omega \subseteq {^b\BR^2}$ be the strip $-\frac{\pi}{2}<y<\frac{\pi}{2}$. The function $u : \BR^2 \to \BC$ given by $u(x,y)=xe^{iy}$ 
maps $\Omega \setminus ( \{0\} \times \BR)$ diffeomorphically onto $\BC \setminus \BR$ and satisfies $u_*(x \partial_x + i\partial_y) = \overline z \partial_{\overline z}$. Thus, 
$u_*( xe^{iy}(x \partial_x + i\partial_y)) = \overline z^2 \partial_{\overline z}$ and, after taking real parts, 
\[ u_*(x^2 \cos x \partial_x+x \sin y \partial_y) =  (x^2-y^2)\partial_x + 2xy \partial_y. \]
The vector feld on the right is the generator of the 1-parameter group of Mobius transformations  $z \mapsto \frac{z}{1-tz}$, $t \in 
\BR$ which is  complete when considered as a flow on $\BC \setminus \BR$ (or, of course, the Riemann sphere). Pulling back through $u$,  one checks that the flow of $x^2 \cos x \partial_x+x \sin y \partial_y$ defines a 1-parameter group of complex $b$-manifold automorphisms of $(\Omega, \langle L_1 \rangle)$.
\end{ex}

\section{Function spaces}

A \textbf{$\mathbf{b}$-holomorphic function} on ${^b\BR^2}$ is a smooth function $f:\BR^2 \to \BC$ (perhaps only locally defined) satisfying $ L_1 f= 0$ where $ L_1 = x\partial_x+i\partial_y$.

\begin{ex}
The function $u:{^b\BR^2} \to \BC$, $u(x,y)=xe^{iy}$ is $b$-holomorphic. More generally, $h \circ u$ is $b$-holomorphic near $(0,0)$ for any holomorphic function $h$ defined near $0$. There exist nonanalytic $b$-holomorphic functions defined near $(0,0)$ as well;  $f(x,y)=\exp(\frac{-1}{u(x,y)})$ for $x>0$,  $f(x,y)=0$ for $x \leq 0$ defines a $b$-holomorphic function on $\{ (x,y)\in{^b\BR^2} : -\frac{\pi}{4}<y<\frac{\pi}{4}\}$.  
\end{ex}

It is interesting to contemplate potential ``$b$ analogues'' for classical spaces of holomorphic functions such as 
Segal-Bargmann space, (weighted) Bergman space, Hardy space, etc.  Let us define ad hoc \textbf{$\mathbf{b}$-Segal-Bargmann space} to be  the space of $b$-holomorphic functions  on ${^b\BR^2}$ whose restriction  to the right half-plane $\BR^2_+$  pushes  forward through the isomorphism $(x,y)\mapsto (\log x, y) : (\BR^2_+,\langle L_1\rangle) \to \BC$ used earlier to an element  of the classical Segal-Bargmann space (entire functions $f$ on $\BC$ satisfying $\int_\BC |f(z)|^2 e^{-|z|^2} \ dA<\infty$). 

\begin{ex}
The function $u(x,y)=xe^{iy}$ belongs to the $b$-Segal-Bargman space (it pushes forward to the exponential function). Considering powers of $u$, we conclude the $b$-Segal-Bargmann space is infinite-dimensional.
\end{ex}

\bibliographystyle{abbrv}
\bibliography{MyBibLib}

\end{document}